\documentclass[11pt,letterpaper,oneside]{amsart}

\usepackage[utf8]{inputenc}
\usepackage{amsthm}
\usepackage{amssymb}
\usepackage{amsmath, amscd}
\usepackage{mathrsfs}
\usepackage{enumitem}
\usepackage{mathtools}
\usepackage{bm} 
\usepackage{tcolorbox}
\usepackage{xcolor}
\usepackage[margin=1.3in]{geometry}
  \setlength{\textheight}{23cm}
  \setlength{\textwidth}{14cm}

\newtheorem{theorem}{Theorem}[section]
\newtheorem*{theorem*}{Theorem}
\newtheorem{lemma}[theorem]{Lemma}
\newtheorem*{lemma*}{Lemma}

\newtheorem*{proposition*}{Proposition}
\newtheorem{corollary}[theorem]{Corollary}
\newtheorem*{corollary*}{Corollary}

\newtheorem*{conjecture*}{Conjecture}
\newtheorem*{claim*}{Claim}

\theoremstyle{definition}
\newtheorem{definition}[theorem]{Definition}
\newtheorem*{definition*}{Definition}
\newtheorem{propositionSp}[theorem]{Proposition}
\newtheorem*{propositionSp*}{Proposition}

\newtheorem*{corollarySp*}{Corollary}

\newcommand{\st}[0]{\mid}
\newcommand{\bdry}[0]{\partial}
\newcommand{\union}[0]{\cup}
\newcommand{\intersect}[0]{\cap}
\DeclareMathOperator{\diam}{diam}
\DeclarePairedDelimiter\set{\{}{\}}

\newcommand{\R}[0]{\mathbb{R}}

\title{Equivalent Topologies on the Contracting Boundary}
\author{Vivian He}

\begin{document}
	\maketitle
	\begin{abstract}
	The contracting boundary of a proper geodesic metric space generalizes the Gromov boundary of a hyperbolic space. It consists of contracting geodesics up to bounded Hausdorff distances. Another generalization of the Gromov boundary is the $\kappa$--Morse boundary with a sublinear function $\kappa$. The two generalizations model the Gromov boundary based on different characteristics of geodesics in Gromov hyperbolic spaces. It was suspected that the $\kappa$--Morse boundary contains the contracting boundary. We will prove this conjecture: when $\kappa =1$ is the constant function, the 1-Morse boundary and the contracting boundary are equivalent as topological spaces. 
	\end{abstract}

\section{Introduction}
There have been many attempts to construct a boundary on proper geodesic metric spaces in order to generalize the Gromov boundary on hyperbolic spaces. The Gromov boundary is the set of equivalence classes of geodesics up to bounded Hausdorff distances. In a similar fashion, Charney and Sultan \cite{CS} defined the \emph{contracting boundary} on CAT(0) spaces to be the set of contracting geodesics modulo Hausdorff equivalence. The \emph{contracting} property was introduced to imitate the behaviour of geodesics in hyperbolic spaces. Cashen and Mackay \cite{CM} generalized the contracting boundary to proper geodesic metric spaces and defined a topology which is quasi-isometry invariant and metrizable when the space is the Cayley graph of a finitely generated group.

More recently, Qing, Rafi, and Tiozzo \cite{QRT1,QRT2} defined the \emph{$\kappa$--Morse boundary} associated to a sublinear function $\kappa$. The topology on the $\kappa$--Morse boundary is quasi-isometry invariant and metrizable. When $\kappa \equiv 1$, the 1-Morse property on a quasi-geodesic is equivalent to the contracting property \cite[Theorem 2.2]{CM}. This suggests that $1$--Morse boundary and the contracting boundary are equal as sets. We will check that in detail in section 3. Moreover, we will show that the topologies are also equivalent. 

\begin{theorem}\label{main}
	For a proper geodesic metric space $X$, the 1-Morse boundary $\bdry_1 X$ and the contracting boundary $\bdry_c X$ are equivalent as topological spaces. 
\end{theorem}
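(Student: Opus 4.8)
The plan is to show that the identity map $\partial_1 X \to \partial_c X$ is a homeomorphism. Section~3 will already have established that the two are equal as sets: combining \cite[Theorem 2.2]{CM} with the fact that in a proper geodesic space every boundary point is represented by a geodesic ray, one checks that a geodesic ray is $1$--Morse if and only if it is contracting, and that the two equivalence relations (bounded Hausdorff distance, and the $\kappa = 1$ case of sublinear fellow-traveling) coincide on such rays. So everything comes down to comparing the topologies. Since the $\kappa$--Morse topology of Qing--Rafi--Tiozzo is metrizable (in particular sequential), while the topology of Cashen--Mackay is by construction the one in which a set is open exactly when it is sequentially open for their fellow-traveling convergence, both topologies are sequential; hence it suffices to prove that a sequence $([\beta_i])$ converges to $[\alpha]$ in $\partial_1 X$ if and only if it converges to $[\alpha]$ in $\partial_c X$.

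Convergence in either boundary records two things about the $[\beta_i]$: a \emph{uniformity} datum --- eventually the $\beta_i$ admit representatives with uniformly bounded Morse gauges, respectively uniformly bounded contraction functions --- and a \emph{depth} datum --- representatives of the $\beta_i$ track $\alpha$ out to a radius tending to infinity. I would match the uniformity data using the quantitative Morse--contracting dictionary underlying \cite[Theorem 2.2]{CM}: a $D$--contracting ray is $\mathfrak m_D$--Morse with $\mathfrak m_D$ depending only on $D$, and an $\mathfrak m$--Morse ray is $D_{\mathfrak m}$--contracting, so a uniform bound of either kind forces one of the other kind. For the depth data, I would first apply the Morse lemma in quantitative form to move freely between uniform quasi-geodesic representatives (used in the Cashen--Mackay convergence) and geodesic representatives based at the fixed basepoint (used in the Qing--Rafi--Tiozzo neighbourhoods) at the cost of a controlled additive error, so that ``tracking $\alpha$'' means the same thing on both sides; then, knowing that a representative geodesic $\beta_i$ stays within a bounded neighbourhood of $\alpha$ out to a large radius $R_i$, I would use the contraction property of $\alpha$ to promote this to $\beta_i$ fellow-traveling $\alpha$ closely out to a radius comparable to $R_i$, with closeness controlled by the contraction function of $\alpha$ alone. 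An Arzel\`{a}--Ascoli argument in the proper space $X$ then extracts, from any subsequence of a depth-converging sequence, a further subsequence converging uniformly on compact sets to a geodesic ray Hausdorff-close to $\alpha$, reconciling the ``subsequential limit'' phrasing of Cashen--Mackay with the ``neighbourhood'' phrasing of Qing--Rafi--Tiozzo. Running both steps in both directions yields the equality of the two convergences, and with it Theorem~\ref{main}.

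The step I expect to be the real obstacle is the honest comparison of ``depth'': Cashen--Mackay measure how long uniform quasi-geodesic representatives remain uniformly close, whereas Qing--Rafi--Tiozzo measure where geodesics from the basepoint toward points far along $\beta_i$ pass near $\alpha$, and one must show that each of these quantities controls the other up to a multiplicative and additive error depending only on the common uniform contraction (equivalently, Morse) bound. A related, lower-stakes subtlety is that Hausdorff equivalence permits moving the basepoint a bounded distance while the $\kappa$--Morse neighbourhoods are defined from a fixed basepoint; one must check such a move changes the depth parameter by at most a bounded additive constant, which again follows from the Morse property but must be tracked to keep the neighbourhood comparison effective. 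Once these estimates are secured, assembling the homeomorphism is the routine point-set topology indicated above.
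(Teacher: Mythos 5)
Your outline differs from the paper's: you propose to compare the two topologies through their convergent sequences (using sequentiality on both sides), whereas the paper compares the neighbourhood bases directly, proving $U(\mathbf{b},R)\subset U_1(\mathbf{b},r)$ straight from the strong $\kappa$--Morse definition and $U_1(\mathbf{b},R)\subset U(\mathbf{b},r)$ via the Quasi-geodesic Image Theorem. The sequential route is legitimate in principle --- the Qing--Rafi--Tiozzo topology is metrizable and the Cashen--Mackay topology is first countable, hence both are sequential --- but note your stated justification is off: the Cashen--Mackay topology is \emph{not} constructed as a sequential topology, it is defined by the neighbourhoods $U(\mathbf{b},r)$, and its sequentiality is a theorem (first countability), not a matter of construction. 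More importantly, your ``uniformity plus depth'' description of convergence is not the definition of either topology; it is an unproved characterization, and the uniform-gauge half of it is nowhere needed in the paper's argument.

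The genuine gap is that the step carrying all the mathematical content is deferred rather than proved. Both $U(\mathbf{b},r)$ and $U_1(\mathbf{b},r)$ quantify over \emph{all} quasi-geodesic representatives $\alpha$ of a point, with thresholds depending on the constants $(q,Q)$ of the representative ($\kappa(\rho_b,q,Q)$ on the contracting side, the gauge $m_b(q,Q)$ with the gate $m_b(q,Q)\leq r/2$ on the $1$--Morse side), so your plan to ``move freely'' to geodesic representatives by the Morse lemma does not respect this quantifier structure: you must say something about representatives whose constants are large compared to $r$ and $R$. The paper handles exactly this by a case split: when $m_b(q,Q)>\sqrt{R}/2$ one forces $q>\sqrt{2r/3}$ or $Q>2r/3$, so $\kappa(\rho_b,q,Q)\geq \max\{3Q,3q^2\}>2r\geq d(\alpha, b\intersect \mathcal{N}_r^c\mathfrak{o})$ and the contracting-side condition holds trivially. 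And the hard direction of the ``depth'' comparison --- from $\alpha|_R\subset \mathcal{N}(b,m_b(q,Q))$ to $\alpha$ coming $\kappa(\rho_b,q,Q)$-close to $b$ beyond radius $r$ --- is precisely what you flag as ``the real obstacle'' and never carry out; the paper proves it by projecting: if $\alpha$ stayed farther than $\kappa(\rho_b,q,Q)$ from $b$ on $[t_r,t_R]$, Theorem \ref{quasigeoimg} bounds $\diam \pi(\alpha_r)\union\pi(\alpha_R)$ by about $3\sqrt{R}$, while the endpoint projections force it to be at least $R-r-\sqrt{R}$, contradicting $R-4\sqrt{R}>r$. Until estimates of this kind are supplied (in both directions), the proposal remains a plausible plan rather than a proof.
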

The $\kappa$--Morse boundary is metrizable for any proper geodesic metric space $X$ and sublinear function $\kappa$ \cite[Theorem 4.10]{QRT2}. So we now have this immediate result:
\begin{corollary}
	The contracting boundary on a proper geodesic metric space is metrizable. 
\end{corollary}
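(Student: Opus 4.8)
The plan is to promote the set identification $\bdry_1 X = \bdry_c X$ of Section~3 to a homeomorphism. Write $\iota\colon \bdry_1 X \to \bdry_c X$ for the bijection established there: by \cite[Theorem 2.2]{CM} a geodesic ray is $1$-Morse precisely when it is (strongly) contracting, and the $1$-Morse equivalence of \cite{QRT1,QRT2} agrees with the bounded-Hausdorff-distance equivalence of \cite{CM}, so $\iota$ is well defined. To show $\iota$ is a homeomorphism I fix a basepoint $o$ and compare, at each boundary point, a neighborhood basis for the Qing--Rafi--Tiozzo topology with one for the Cashen--Mackay topology. In both cases the basis at a class $[\gamma]$ (with $\gamma$ a contracting $1$-Morse ray from $o$, carrying a Morse gauge $\mathfrak m_\gamma$ and a contraction constant $D_\gamma$) is indexed by a radius $r>0$: the $1$-Morse neighborhood $\mathcal U(\gamma,r)$ is, in essence, the set of boundary classes all of whose geodesic representatives track $\gamma$ inside the ball $B(o,r)$ with error governed by $\mathfrak m_\gamma$, while the Cashen--Mackay neighborhood $\mathfrak U(\gamma,r)$ is the set of those tracking $\gamma$ out to radius comparable to $r$ with error governed by $D_\gamma$; in each family a larger radius gives a smaller set, and the intersection over all $r$ of either family is $\{[\gamma]\}$. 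It therefore suffices to prove, for every $\gamma$ and every $r$, the two cofinality inclusions
\[
\iota\bigl(\mathcal U(\gamma,s)\bigr)\subseteq \mathfrak U\bigl(\iota[\gamma],r\bigr),
\qquad
\mathfrak U\bigl(\iota[\gamma],s'\bigr)\subseteq \iota\bigl(\mathcal U(\gamma,r)\bigr),
\]
for suitable radii $s=s(\gamma,r)$ and $s'=s'(\gamma,r)$; the first gives continuity of $\iota$ and the second gives continuity of $\iota^{-1}$ (equivalently, openness of $\iota$), by the usual neighborhood chase. Since $\bdry_1 X$ is metrizable one could instead argue the first direction through sequences, but the second is handled no differently, so I treat the two symmetrically.

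The technical heart is a tracking lemma reconciling the two notions of ``$[\beta]$ stays close to $\gamma$ out to a large scale''. The main input is \cite[Theorem 2.2]{CM}, which makes the contraction constant and the Morse gauge of a fixed contracting ray mutually computable: $D_\gamma$ determines a Morse gauge for $\gamma$ and $\mathfrak m_\gamma$ determines a contraction constant for $\gamma$, effectively; and, since geodesic rays from $o$ asymptotic to a contracting ray lie within a Hausdorff distance controlled by its contraction constant, the two descriptions of $\gamma$ (up to the choice of representative of $[\gamma]$) differ only by a bounded amount depending on $D_\gamma$. Feeding this into the standard geometry of contracting/Morse geodesics---coarse Lipschitzness of nearest-point projection to $\gamma$, fellow-traveling of geodesics with endpoints near $\gamma$, and the divergence estimates preventing a geodesic from hugging $\gamma$ at an intermediate distance---one shows that a class tracking $\gamma$ out to radius $r$ in one sense tracks $\gamma$ inside $B(o,\rho(r))$ in the other, where $\rho(r)\to\infty$ as $r\to\infty$ and $\rho$ depends only on $\gamma$'s data. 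Substituting these implications into the definitions of $\mathcal U$ and $\mathfrak U$ produces admissible radii $s,s'$ and establishes the two inclusions above, proving Theorem~\ref{main}; the stated corollary is then immediate from \cite[Theorem 4.10]{QRT2}, metrizability being a homeomorphism invariant.

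The main obstacle is uniformity rather than any individual estimate. Neither topology is the direct limit over the strata $\bdry_c^{(D)} X$ of $D$-contracting rays---a basic neighborhood of $\gamma$ is calibrated by $\gamma$'s own data yet contains classes of arbitrarily larger contraction constant---so the tracking lemma cannot be proved stratum by stratum and must be run with every implicit constant and the function $\rho$ depending on $\gamma$ alone, uniformly over all competing classes in the neighborhood. One must also reconcile the genuinely different bookkeeping of the two frameworks (a Morse function applied to all quasigeodesics with endpoints on $\gamma$, versus a contraction function applied to balls avoiding $\gamma$), keep the various representative-quantifiers compatible, and check that under $\iota$ one neighborhood basis refines the other at \emph{every} boundary point, not merely at $[\gamma]$, which is exactly what upgrades the cofinality inclusions to continuity and openness. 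Granting this uniformity, the rest is careful tracking of constants.
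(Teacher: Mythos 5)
Your proposal is correct and follows essentially the same route as the paper: identify $\bdry_1 X = \bdry_c X$ as sets, upgrade this to a homeomorphism by showing the $U_1$- and $U$-neighbourhood families are mutually cofinal at every boundary point (exactly Propositions \ref{prop:UinU1} and \ref{U1inU}, where your ``tracking lemma'' is carried out concretely via the Quasi-geodesic Image Theorem \ref{quasigeoimg} and the strong $\kappa$--Morse property), and then transfer metrizability from \cite[Theorem 4.10]{QRT2}, metrizability being a homeomorphism invariant. The only difference is that you sketch the cofinality estimates rather than cite Theorem \ref{main}, but the structure and inputs match the paper's argument.
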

This strengthens the result from \cite{CM} which shows the metrizability of the contracting boundary when $X$ is the Cayley graph of a finitely generated group.
\subsection*{Background}
The quasi-isometry invariance of the boundary is significant as it respects Cayley graphs of groups. If we copy the definition of the Gromov boundary to proper geodesic metric spaces then the quasi-isometry invariance no longer holds: Croke and Kleiner \cite{CK} constructed an example of two quasi-isometric CAT(0) spaces whose Gromov boundaries are not homeomorphic.

To fix this, Charney and Sultan \cite{CS} defined the Morse boundary of a complete CAT(0) space to be the subset of the Gromov boundary consisting of only Morse geodesics. A geodesic is $M$--Morse if any quasi-geodesic with end points on the geodesic is contained in the $M$-neighbourhood of the geodesic. Charney and Sultan showed that this boundary equipped with the direct limit topology is invariant under quasi-isometries. However, this topology is generally not first countable as shown in \cite{CM}. Cashen and Mackay \cite{CM} extended this definition to proper geodesic metric spaces and named it the contracting boundary. They introduced a topology on the contracting boundary which is first countable, Hausdorff, and regular. Moreover, they showed that when the space is the Cayley graph of a finitely generated group, the boundary is metrizable.

Qing, Rafi, and Tiozzo \cite{QRT1} further generalized this notion of the contracting boundary to the $\kappa$--Morse boundary. They did this by introducing a sublinear multiplicative error term on the Morse constant. This relaxation encapsulates the asymptotic behaviour of random walks on spaces \cite{QRT1}.

\section{Contracting Boundary}
In this section, we will restate the definition and notable properties of the contracting boundary as introduced in \cite{CM} and \cite{CS}. Let $X$ be a proper geodesic metric space with base point $\mathfrak{o}$ and metric $d_X$. We say a function $\rho: [0, \infty) \to [1, \infty)$ is sublinear if $\lim_{x\to \infty}\frac{\rho(x)}{x} = 0$. For simplicity, we will also require $\rho$ to be increasing and concave. The following definitions are equivalent. 

\begin{definition}\label{contracting}
	Let $Z$ be a closed subset of $X$ and $\pi_Z: X \to 2^Z$ be the closest point projection to $Z$. We say that $Z$ is contracting if there is a sublinear function $\rho_Z$ such that for all $x$ and $y$ in $X$, 
\begin{equation*}
	d(x,y) \leq d(x,Z) \qquad \implies \qquad  \diam (\pi_Z(x) \union \pi_Z(y)) \leq \rho(d(x,Z)).
\end{equation*}
\end{definition}

\begin{definition}\label{morse}
Let $Z$ be a closed subset of $X$, we say $Z$ is Morse if there exists a proper function $m_Z: [1,\infty) \times [0,\infty) \to \R$ for every $q \geq 1$ and every $Q \geq 0$, every $(q,Q)$-quasi-geodesic with endpoints in $Z$ is contained in the $m_Z(q,Q)$-neighbourhood of $Z$.
\end{definition}
The equivalence of contracting sets and Morse sets is proven in \cite{CM}. The notion of Morse is generalized to $\kappa$--Morse, which is then used to define the $\kappa$--Morse boundary. For the definition of the contracting boundary, we continue working with the contracting definition. 

\begin{definition}
	The contracting boundary of $X$, $\bdry_c X$, is defined to be the set of equivalence classes of contracting quasi-geodesic rays based at $\mathfrak{o}$. Two contracting quasi-geodesics are equivalent if they are a bounded Hausdorff distance apart. 
\end{definition}

We now introduce the topology defined in \cite{CM}. Given a sublinear function $\rho$ and constants $q\geq 1$, $Q\geq 0$, define
	\begin{equation*}
		\kappa(\rho,q,Q) = \max \set{3q, 3Q^2, 1+\inf \set{R>0 \st \forall r \geq R, 3q^2 \rho(r) <r }}. 	
	\end{equation*}
The constant $\kappa$ is defined so that it satisfies the property that for $r \geq \kappa(\rho,L,A)$,
\begin{equation*}
	r-L^2\rho(r)-A \geq  \frac{1}{3} r \geq L^2 \rho (r).
\end{equation*}
This inequality proves the following theorem \cite[Theorem 4.2]{CM}, which we will use in our proof of equivalent topologies. 
\begin{theorem}[Quasi-geodesic Image Theorem]\label{quasigeoimg}
		Let $Z$ be $\rho$-contracting. Let $\alpha:[0,T] \to X$ be a continuous $(q,Q)$-quasi-geodesics segment. If $d(\alpha,Z) \geq \kappa(\rho,q,Q)$ then
		\begin{equation*}
			\diam \pi(\alpha(0))\union \pi(\alpha(T)) \leq \frac{q^2 +1}{q^2}(Q + d(\alpha(T),Z))+ \frac{q^2-1}{q^2}d(\alpha(0),Z) + 2 \rho(d(\alpha(0),Z)).
		\end{equation*}
	\end{theorem}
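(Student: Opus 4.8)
The plan is to run a greedy telescoping argument along $\alpha$: break $\alpha$ into sub-segments short enough that the contracting condition of Definition~\ref{contracting} applies to each consecutive pair, sum the resulting projection estimates, and then use the precise size of $\kappa=\kappa(\rho,q,Q)$ to absorb the accumulated error back into the left-hand side. Concretely, set $\delta_i=d(\alpha(t_i),Z)$, start from $t_0=0$, and given $t_i<T$ put $t_{i+1}=\min\{\,t_i+\tfrac{2\delta_i}{3q}\,,\,T\,\}$. Since $\delta_i\ge\kappa\ge\max\{3q,3Q^2\}$ one has $Q\le\tfrac13\delta_i$, so
\[
 d(\alpha(t_i),\alpha(t_{i+1}))\le q(t_{i+1}-t_i)+Q\le\tfrac23\delta_i+\tfrac13\delta_i=\delta_i=d(\alpha(t_i),Z),
\]
and Definition~\ref{contracting} gives $\diam(\pi(\alpha(t_i))\cup\pi(\alpha(t_{i+1})))\le\rho(\delta_i)$. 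Every full step advances the parameter by at least $\tfrac{2\kappa}{3q}\ge 2$, so the process stops at some $t_n=T$, and chaining the estimates yields $\diam(\pi(\alpha(0))\cup\pi(\alpha(T)))\le\sum_{i=0}^{n-1}\rho(\delta_i)$.

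Next I would control this sum. Concavity of $\rho$ (together with $\rho\ge 1>0$) makes $x\mapsto\rho(x)/x$ non-increasing, so $\delta_i\ge\kappa$ forces $\rho(\delta_i)\le\tfrac{\rho(\kappa)}{\kappa}\,\delta_i$. For the full steps $\delta_i=\tfrac{3q}{2}(t_{i+1}-t_i)$, while the single truncated final step satisfies $\delta_{n-1}\le 3\big(Q+d(\alpha(T),Z)\big)$ — this follows from $\delta_{n-1}\le d(\alpha(t_{n-1}),\alpha(T))+d(\alpha(T),Z)$ together with the truncation inequality $T-t_{n-1}\le\tfrac{2\delta_{n-1}}{3q}$. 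Summing gives $\sum_i\delta_i\le\tfrac{3q}{2}T+3\big(Q+d(\alpha(T),Z)\big)$, hence
\[
 \diam(\pi(\alpha(0))\cup\pi(\alpha(T)))\le\tfrac{\rho(\kappa)}{\kappa}\Big(\tfrac{3q}{2}\,T+3Q+3\,d(\alpha(T),Z)\Big).
\]

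Finally I would eliminate $T$. The quasi-geodesic lower bound gives $T\le q\,d(\alpha(0),\alpha(T))+qQ$, and passing through the closest-point projections, $d(\alpha(0),\alpha(T))\le d(\alpha(0),Z)+\diam(\pi(\alpha(0))\cup\pi(\alpha(T)))+d(\alpha(T),Z)$. Substituting turns the previous display into an inequality of the form
\[
 \diam(\cdots)\le c\,\diam(\cdots)+\big(\text{affine in }d(\alpha(0),Z),\,d(\alpha(T),Z),\,Q\big),\qquad c=\tfrac{3q^2\rho(\kappa)}{2\kappa},
\]
and here is where the construction of $\kappa$ enters: the displayed inequality characterizing $\kappa$ says $q^2\rho(\kappa)\le\tfrac13\kappa$, so $c\le\tfrac12<1$ and one may absorb $\diam(\cdots)$ to the left, obtaining a bound of the asserted shape.

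The main obstacle is not any individual step but the quantitative bookkeeping needed to land on the exact coefficients $\tfrac{q^2+1}{q^2}$, $\tfrac{q^2-1}{q^2}$ and the explicit summand $2\rho(d(\alpha(0),Z))$ in the theorem; the crude version above only yields $\diam(\cdots)\lesssim d(\alpha(0),Z)+d(\alpha(T),Z)+Q$ with non-optimal constants. Getting the stated form requires running the telescoping from whichever endpoint of $\alpha$ lies closer to $Z$, keeping the boundary error terms $\rho(\delta_0)$ and $\rho(\delta_{n-1})$ separate rather than folding them into $\tfrac{\rho(\kappa)}{\kappa}\sum\delta_i$, and tracking $q$ and $Q$ through the two displayed inequalities without the lossy simplification $Q\le\tfrac13\delta_i$.
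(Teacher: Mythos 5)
Two things to note. First, the paper itself does not prove this theorem: it is quoted verbatim from Cashen--Mackay \cite{CM} (their Theorem 4.2), with only the remark that the defining inequality of $\kappa(\rho,q,Q)$ is what drives the proof. So your attempt has to be measured against the statement itself. Your skeleton is sound as far as it goes: the greedy subdivision with $t_{i+1}-t_i=\frac{2\delta_i}{3q}$ does satisfy the hypothesis of Definition~\ref{contracting} at each step (using $\kappa\geq\max\{3q,3Q^2\}$ and $q\geq1$ to get $Q\leq\delta_i/3$), the chained estimate $\diam\bigl(\pi(\alpha(0))\cup\pi(\alpha(T))\bigr)\leq\sum_i\rho(\delta_i)$ is valid, monotonicity of $\rho(x)/x$ follows from concavity and $\rho\geq1$, and the absorption constant $c=\frac{3q^2\rho(\kappa)}{2\kappa}\leq\frac12$ is exactly what the displayed property of $\kappa$ guarantees. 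This correctly yields \emph{a} bound affine in $d(\alpha(0),Z)$, $d(\alpha(T),Z)$ and $Q$.

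However, as you concede in your final paragraph, it does not yield the stated inequality, and the discrepancy is not merely cosmetic. The theorem's coefficient on $d(\alpha(0),Z)$ is $\frac{q^2-1}{q^2}$, which vanishes for geodesics ($q=1$), leaving only the sublinear contribution $2\rho(d(\alpha(0),Z))$. Your scheme linearizes every contraction term, including those at scale comparable to $d(\alpha(0),Z)$, via $\rho(\delta_i)\leq\frac{\rho(\kappa)}{\kappa}\delta_i$, so after absorption the coefficient on $d(\alpha(0),Z)$ is about $\frac{c}{1-c}$, which can be close to $1$ no matter what $q$ is; the $q=1$ case of the statement is therefore out of reach of the argument as written. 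The fixes you gesture at (starting from the nearer endpoint, keeping the boundary terms as honest $\rho(\delta_0)$, $\rho(\delta_{n-1})$, and playing the lower quasi-geodesic bound $\frac1q|t-s|-Q$ against the upper one, which is where the factors $\frac{q^2\pm1}{q^2}$ come from in \cite{CM}) are the right direction, but they are not carried out, so the theorem as stated is not proved. For what it is worth, your weaker bound would still suffice for this paper's application in Proposition~\ref{U1inU} after adjusting the constant in the condition $R-4\sqrt{R}>r$, but that is a different statement from the one you were asked to establish.
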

	\begin{figure}
	\includegraphics[width=\textwidth]{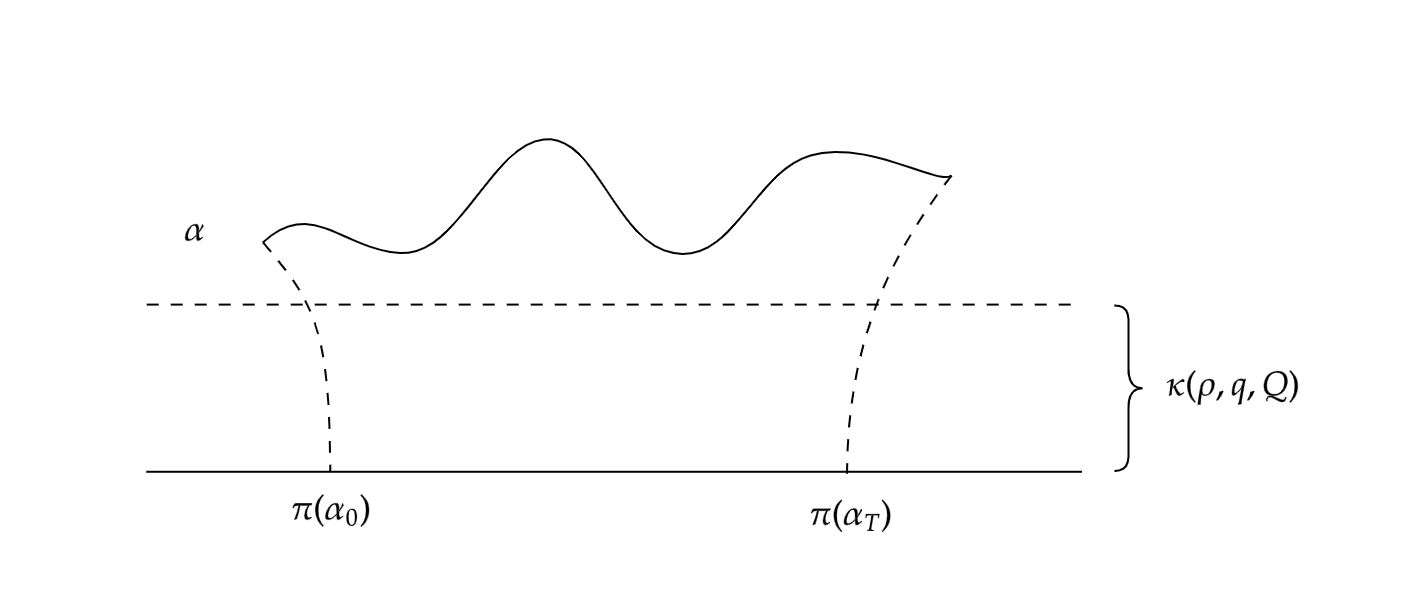}
	\caption{Quasi-geodesic Image Theorem}\label{qgImage}
	\end{figure}

 Let $\mathbf{b} \in \bdry_c X$ and let $b \in \mathbf{b}$ be the unique geodesic in the equivalence class. Let $\rho_b$ be a sublinear function such that $b$ is $\rho_b$-contracting. The topology is defined by the following open neighbourhood $U(b, r)$. 

\begin{definition}
	Let $r > 0$, $U(\mathbf{b},r)$ is defined to be the set of points $\mathbf{a} \in \bdry X$ such that for all $Q \geq 1$ and $q \geq 0$ and every continuous $(q,Q)$-quasi-geodesic ray $\alpha \in \mathbf{a}$, we have 
	\begin{equation*}
			d(\alpha, b \intersect \mathcal{N}_r^c \mathfrak{o} )\leq \kappa(\rho_b,q,Q). 
	\end{equation*}
	Here, $\mathcal{N}_r \mathfrak{o}$ stands for the $r$ neighbourhood of $\mathfrak{o}$. 
\end{definition}

\section{$\kappa$--Morse Boundary}
In this section, we will introduce the $\kappa$--Morse boundary and its topology defined in \cite{QRT1} and \cite{QRT2}. We will then check that that the 1-Morse boundary is equal to the contracting boundary as a set.

For any point $p \in X$, we use $||p||$ to denote $d_X(\mathfrak{o},p)$. Given a quasi-geodesic ray $\alpha$ starting at $\mathfrak{o}$, let $t_r$ be the first time that $||\alpha(t_r)|| = r$. We use $\alpha_r$ to denote $\alpha(t_r)$, and $\alpha|_r$ to denote $\alpha([0,t_r])$.

Given a sublinear function $\kappa$, the $\kappa$--Morse boundary introduced in \cite{QRT1} and \cite{QRT2} is attained by relaxing the Morse set. We first loosen the definition of a neighbourhood to allow a $\kappa$ multiplicative error:
\begin{equation*}
	\mathcal{N}_{\kappa}(Z,m):= \set{x\in X : d_X(x,Z) \leq m \cdot \kappa (||x||)}.
\end{equation*}
When $\kappa \equiv 1$ this is just the usual $m$-neighbourhood. 

\begin{definition}\label{weaklymorse}
We say $Z$ is $\kappa$--Morse if there exists a proper function $m_Z: [1,\infty) \times [0,\infty) \to \R$ such that for every $q \geq 1$ and every $Q \geq 0$, every $(q,Q)$-quasi-geodesic $\beta:[s,t] \to X$ with endpoints on $Z$ satisfies
\begin{equation*}
	\beta[s,t] \subset \mathcal{N}_\kappa (Z, m_Z(q,Q)). 
\end{equation*}
\end{definition}
When $\kappa \equiv 1$, this is equivalent to definition \ref{morse}. For the topology, we will work with the following definition of $\kappa$--Morse (sometimes called strongly Morse). When $Z$ is a quasi-geodesic, the two definitions of $\kappa$--Morse are equivalent \cite{QRT2}.

\begin{definition}\label{stronglymorse}
	Let $\kappa$ be a concave sublinear function. We say $Z$ is $\kappa$--Morse if there is a proper function $m_Z:\R^2 \to \R$ such that for any sublinear function $\kappa'$ and any $r >0$, there exists $R$ such that for any $(q,Q)$-quasi-geodesic ray $\beta$ with $m_Z(q,Q) \leq \frac{r}{2\kappa(r)}$,
	\begin{equation*}
		d_X(\beta_R, Z) \leq \kappa'(R) \qquad \implies \qquad \beta |_r \subset N_\kappa(Z,m_Z(q,Q)).
	\end{equation*}
	We call $m_Z(q,Q)$ the Morse gauge function. We will assume that $m_Z(q,Q) \geq \max(q,Q)$. 
\end{definition}

\begin{definition}\label{kappabdry}
	The $\kappa$--Morse boundary, $\bdry_\kappa X$, is defined to be the set of all equivalence classes of $\kappa$--Morse quasi-geodesic rays based at $\mathfrak{o}$. Two $\kappa$--Morse quasi-geodesic rays $\alpha$ and $\beta$ are equivalent if they sublinearly fellow travel each other:
	\begin{equation*}
		\lim_{r\to \infty} \frac{d_X(\alpha_r, \beta_r)}{r} = 0. 
	\end{equation*}
\end{definition}
From the definition we immediately have the following lemma.
\begin{lemma}\label{lem: equal sets}
	When $\kappa \equiv 1$, $\bdry_1 X = \bdry_c X$ as sets. 
\end{lemma}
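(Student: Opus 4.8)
The plan is to establish a bijective correspondence between $\bdry_1 X$ and $\bdry_c X$ by identifying the underlying sets of quasi-geodesic rays and then checking that the two equivalence relations coincide. On the level of individual rays, this amounts to showing that a quasi-geodesic ray is $1$--Morse (in the sense of Definition \ref{weaklymorse} with $\kappa \equiv 1$) if and only if it is contracting (Definition \ref{contracting}). I would invoke \cite[Theorem 2.2]{CM}, already cited in the introduction, which gives precisely the equivalence of the $1$--Morse property and the contracting property for quasi-geodesics; combined with the remark after Definition \ref{weaklymorse} that Definition \ref{weaklymorse} with $\kappa \equiv 1$ reduces to the classical Morse condition of Definition \ref{morse}, this settles the ``same rays'' half of the claim. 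So the set of $1$--Morse quasi-geodesic rays based at $\mathfrak o$ equals the set of contracting quasi-geodesic rays based at $\mathfrak o$.

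Next I would reconcile the two equivalence relations. In $\bdry_c X$ two contracting rays are identified when they have bounded Hausdorff distance, whereas in $\bdry_1 X$ (Definition \ref{kappabdry} with $\kappa \equiv 1$) they are identified when they sublinearly fellow travel, i.e. $d_X(\alpha_r,\beta_r)/r \to 0$. Bounded Hausdorff distance trivially implies sublinear fellow travelling, so one direction is immediate. For the converse, suppose $\alpha$ and $\beta$ are contracting rays with $d_X(\alpha_r,\beta_r)/r \to 0$; I would argue that they must in fact stay within bounded Hausdorff distance. The key mechanism is the Quasi-geodesic Image Theorem (Theorem \ref{quasigeoimg}) together with the contracting property: if $\alpha$ wandered arbitrarily far from $\beta$, then long subsegments of $\alpha$ would lie far from $\beta$, forcing the closest-point projection to $\beta$ of such a subsegment to have controlled (sublinear) diameter by contraction, and then tracking how the projected images of $\alpha_r$ and $\beta_r$ relate — using that $\|\alpha_r\| = \|\beta_r\| = r$ and $d_X(\alpha_r,\beta_r) = o(r)$ — one derives a uniform bound on $d_X(\alpha(t),\beta)$, hence (by symmetry) bounded Hausdorff distance. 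This is essentially the standard fact that contracting rays that sublinearly fellow travel are asymptotic in the strong sense; it is implicit in \cite{CM} and I would cite or reprove it.

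The step I expect to be the main obstacle is this converse direction — upgrading sublinear fellow travelling to bounded Hausdorff distance for contracting rays. The subtlety is that ``contracting'' is only a sublinear-diameter bound on projections, so one must be careful that the accumulated error along $\alpha$ does not itself grow linearly; making the estimate uniform (independent of how far out along the rays one looks) requires feeding the sublinear fellow-travelling hypothesis back into the contraction inequality at the right scale, and handling the quasi-geodesic constants via $\kappa(\rho,q,Q)$ as in Theorem \ref{quasigeoimg}. Once that is in place, the bijection $\bdry_1 X \to \bdry_c X$ is the identity on equivalence classes and the lemma follows; the topological equivalence claimed in Theorem \ref{main} is then the substantive content addressed in the remaining sections, not in this lemma.
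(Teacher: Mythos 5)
Your proposal is correct in outline and agrees with the paper on the first half: identifying the $1$--Morse rays with the contracting rays via Definition~\ref{morse}, Definition~\ref{weaklymorse} and the equivalence from \cite{CM}, and noting that bounded Hausdorff distance trivially gives sublinear fellow travelling. Where you genuinely diverge is the converse, which you flag as the main obstacle and plan to handle with a contraction argument via Theorem~\ref{quasigeoimg}. The paper does not need any projection estimate here: it uses the \emph{strongly} Morse formulation, Definition~\ref{stronglymorse} (equivalent to Definition~\ref{weaklymorse} for quasi-geodesics by \cite{QRT2}), whose hypothesis is exactly a sublinear closeness condition. Setting $\kappa'(R):=d_X(\alpha_R,\beta_R)$, which is sublinear by assumption, one has $d_X(\beta_R,\alpha)\le\kappa'(R)$ for all $R$, so Definition~\ref{stronglymorse} immediately yields $\beta|_r\subset N_1(\alpha,m_\alpha(q_2,Q_2))$ for all $r\ge 2m_\alpha(q_2,Q_2)$, and symmetrically for $\alpha$ in $N_1(\beta,m_\beta(q_1,Q_1))$; the Hausdorff distance is then bounded by the larger gauge. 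In other words, the subtlety you worry about (that $\beta_R$ is only sublinearly, not boundedly, close to $\alpha$, so one cannot naively run the Morse property) is precisely what the strongly Morse definition was built to absorb. Your route can be completed — take a maximal subsegment of $\alpha$ on which $d(\cdot,\beta)\ge\kappa(\rho_\beta,q,Q)$, apply Theorem~\ref{quasigeoimg}, and use $d(\alpha_R,\beta)\le d(\alpha_R,\beta_R)=o(R)$ to contradict the linear lower bound on the projection diameter if the excursion never returns — but as written it is only a sketch, and carrying it out amounts to reproving, in this special case, the weak-to-strong Morse equivalence the paper simply imports from \cite{QRT2}. What your approach buys is independence from that imported equivalence (and a self-contained argument in the spirit of Proposition~\ref{U1inU}); what the paper's approach buys is a three-line proof that reuses exactly the machinery its topology is phrased in.
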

\begin{proof}
When $\kappa \equiv 1$, the definition \ref{weaklymorse} of 1-Morse and the definition \ref{morse} of Morse coincide. It remains to be shown that the equivalence relations are the same. Let $\alpha$ and $\beta$ be 1-Morse quasi-geodesics with quasi-geodesic constants $(q_1,Q_1)$ and $(q_2, Q_2)$. If they are a bounded Hausdorff distance away, then 
\begin{equation*}
	\lim_{r\to \infty} \frac{d_X(\alpha_r, \beta_r)}{r} = 0. 
\end{equation*}
Conversely, if $\alpha$ and $\beta$ fellow travel each other, then the function $\kappa'(R) := d_X(\alpha_X,\beta_X)$ is sublinaer. Then for all $R>0$,
\begin{equation*}
	d_X(\beta_R,\alpha) \leq \kappa'(R).
\end{equation*}
By definition \ref{stronglymorse} for any $r\geq 2 m_{\alpha}(q_2,Q_2)$,
\begin{equation*}
	\beta |_r \subset N_1(\alpha,m_\alpha(q_2,Q_2)).
\end{equation*}
Similarly, for any $r\geq 2 m_{\beta}(q_1,Q_1)$,
\begin{equation*}
	\alpha |_r \subset N_1(\beta,m_\beta(q_1,Q_1)).
\end{equation*}
Since $r$ can be arbitrarily large, the Hausdorff distance between $\alpha$ and $\beta$ is bounded by the maximum of $(m_\alpha(q_2,Q_2)$ and $m_\beta(q_1,Q_1))$. This proves the equivalence.
\end{proof}
We now introduce the open neighbourhood in $\bdry_\kappa X$ that defines the topology in \cite{QRT2}.
 
\begin{definition}
	Let $r>0$ and $\mathbf{b}\in \bdry_\kappa X$. Let $b \in \mathbf{b}$ be a $\kappa$--Morse quasi-geodesic ray. Define $U_{\kappa}(\mathbf{b},r)$ to be the set of points $\mathbf{a}\in \bdry_\kappa X$ such that for any $(q,Q)$-quasi-geodesic ray $\alpha \in \mathbf{a}$,
\begin{equation*}
	m_b (q,Q) \leq \frac{r}{2\kappa(r)} \qquad \implies \qquad 
	\alpha|_r \subset N_\kappa (b, m_b(q,Q)). 
\end{equation*}
\end{definition}

\section{Proof of Equivalent Topologies}
We now show that $U_1$ and $U$ define equivalent topologies on $\bdry_1 X = \bdry_c X$. We will prove both directions of containment for the open neighbourhoods. Let $\mathbf{b} \in \bdry_cX$ and $r >0$.
\begin{propositionSp}\label{prop:UinU1}
	Given $U(\mathbf{b},r)$, there is $R>0$ such that $U(\mathbf{b},R) \subset U_1(\mathbf{b},r)$. 
\end{propositionSp}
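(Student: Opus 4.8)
The plan is to show that, after enlarging the radius, membership in the contracting neighbourhood $U(\mathbf{b},R)$ forces membership in the $1$--Morse neighbourhood $U_1(\mathbf{b},r)$. Fix $\mathbf{a}\in U(\mathbf{b},R)$ and a continuous $(q,Q)$--quasi-geodesic ray $\alpha\in\mathbf{a}$; we may assume the hypothesis of the $U_1$ condition, namely $m_b(q,Q)\le \frac{r}{2}$ (since $\kappa\equiv 1$), and we must conclude $\alpha|_r\subset N_1(b,m_b(q,Q))$. The point is to translate the contracting-side control, which says that $\alpha$ cannot wander far from the \emph{tail} $b\cap\mathcal N_R^c\mathfrak o$, into the $1$--Morse-side control, which says that the \emph{initial segment} $\alpha|_r$ stays uniformly close to all of $b$. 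First I would unpack what $\mathbf{a}\in U(\mathbf b,R)$ gives: $d(\alpha, b\cap\mathcal N_R^c\mathfrak o)\le \kappa(\rho_b,q,Q)$, so every point of $\alpha$ is within $\kappa(\rho_b,q,Q)$ of a point of $b$ lying at distance $\ge R$ from $\mathfrak o$.

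The key steps, in order, would be: (i) Record the relation between the two ``gauge'' functions. By the equivalence of contracting and Morse for quasi-geodesics \cite[Theorem 2.2]{CM}, and the fact that $b$ is $\rho_b$--contracting, we may take the Morse gauge $m_b(q,Q)$ to be controlled by $\kappa(\rho_b,q,Q)$ (up to an explicit additive/multiplicative adjustment), so that establishing closeness to $b$ with constant $\kappa(\rho_b,q,Q)$ suffices to establish it with constant $m_b(q,Q)$ after possibly enlarging $m_b$; I would simply fix $m_b$ to dominate the relevant expression. (ii) Choose $R$ large. The natural choice is $R$ big enough that $2R$ exceeds the diameter bound produced by Theorem~\ref{quasigeoimg} applied to the segment of $\alpha$ from $\mathfrak o$-ish to $\alpha_r$, together with $R > r + (\text{error constants})$; concretely $R$ should be chosen so that any point of $b$ within the contracting-projection-distance of $\alpha|_r$ that is \emph{also} at distance $\ge R$ from $\mathfrak o$ is impossible unless $\alpha|_r$ is already trapped near the part of $b$ inside radius $R$. (iii) Argue by contradiction: suppose some $\alpha(t_0)$ with $\|\alpha(t_0)\|\le r$ has $d(\alpha(t_0),b) > m_b(q,Q)$. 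Then $d(\alpha(t_0),b)\ge \kappa(\rho_b,q,Q)$, so by the Quasi-geodesic Image Theorem the projection $\pi_b$ of the subsegment of $\alpha$ on either side of $\alpha(t_0)$ has controlled diameter; combining the projection estimates on $[0,t_0]$ and on the tail of $\alpha$ shows that the whole nearest-point projection of $\alpha$ to $b$ lies in a ball of radius roughly $\|\alpha(t_0)\| + (\text{const}) \le r + (\text{const})$ about $\mathfrak o$. (iv) But $\mathbf a\in U(\mathbf b,R)$ says $\alpha$ comes within $\kappa(\rho_b,q,Q)$ of $b\cap\mathcal N_R^c\mathfrak o$, i.e. of a point $p\in b$ with $\|p\|\ge R$; its projection $\pi_b$ of the nearby $\alpha$-point is then within $\kappa(\rho_b,q,Q)$ of $p$, hence at distance $\ge R-\kappa(\rho_b,q,Q)$ from $\mathfrak o$. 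Choosing $R$ so that $R-\kappa(\rho_b,q,Q) > r + (\text{const})$ contradicts step (iii), giving $d(\alpha(t_0),b)\le m_b(q,Q)$ for all such $t_0$, i.e. $\alpha|_r\subset N_1(b,m_b(q,Q))$.

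A subtlety to handle carefully: the constant $\kappa(\rho_b,q,Q)$ and the Morse gauge $m_b(q,Q)$ both depend on $(q,Q)$, but the required $R$ must be uniform over all $(q,Q)$. This is fine because the hypothesis $m_b(q,Q)\le \frac r2$ restricts us to a bounded set of gauges — properness of $m_b$ means only finitely much of the $(q,Q)$-range is relevant — so the supremum of the error constants over the admissible $(q,Q)$ is finite, and $R$ can be chosen exceeding it. I would also need the elementary geometric fact that if every point of a quasi-geodesic ray $\alpha$ projects into a bounded region of $b$ \emph{and} $\alpha$ is within a bounded distance of a point $p\in b$, then that point $p$ itself is near the bounded region — this follows from the contraction property controlling how $\pi_b$ moves along $\alpha$.

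The main obstacle I expect is step (iii)–(iv): making the bookkeeping in the Quasi-geodesic Image Theorem precise enough to pin down that the projection of \emph{all} of $\alpha$ — not just $\alpha|_r$ — is confined near radius $r$, since $U(\mathbf b,R)$ speaks about the \emph{whole} ray $\alpha$ meeting the tail of $b$, whereas $U_1(\mathbf b,r)$ only constrains $\alpha|_r$. Reconciling these requires showing that if $\alpha|_r$ strayed far from $b$, then the contracting property would prevent $\alpha$ from ever returning close to the far tail of $b$, contradicting the $U(\mathbf b,R)$ hypothesis; getting the quantifiers and the dependence of constants on $(q,Q)$ straight there is the delicate part.
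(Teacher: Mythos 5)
Your steps (iii)--(iv) contain the genuine gap. Theorem \ref{quasigeoimg} only applies to a quasi-geodesic segment that stays at distance at least $\kappa(\rho_b,q,Q)$ from $b$ throughout, so you cannot apply it to $[0,t_0]$ (which begins at $\mathfrak{o}\in b$, at distance $0$) nor to the whole tail of $\alpha$ (which, by the very hypothesis $\mathbf{a}\in U(\mathbf{b},R)$, comes within $\kappa(\rho_b,q,Q)$ of $b$). Consequently your intermediate claim that the nearest-point projection of the \emph{whole} ray is confined to a ball of radius roughly $r$ is false: the far tail of $\alpha$ projects near the point $p\in b$ with $\|p\|\geq R$, so the ``contradiction'' you reach is with an unjustified claim rather than with the assumption $d(\alpha(t_0),b)>m_b(q,Q)$. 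Likewise, the proposed mechanism ``if $\alpha|_r$ strayed far from $b$, the contracting property would prevent $\alpha$ from ever returning close to the far tail'' is not true: a ray can stray any bounded amount beyond $\kappa(\rho_b,q,Q)$ and still return. What is true, and what repairs your step, is the standard excursion argument: take the maximal interval $[s_1,s_2]\ni t_0$ on which $d(\alpha(\cdot),b)\geq\kappa(\rho_b,q,Q)$ (it terminates, at some time before $\alpha$ meets the $K$-neighbourhood of $b\intersect\mathcal{N}_R^c\mathfrak{o}$, once $R>r+K$), apply Theorem \ref{quasigeoimg} on that interval where both endpoints are at distance exactly $\kappa(\rho_b,q,Q)$, and deduce an absolute bound $d(\alpha(t_0),b)\leq D(q,Q,\rho_b)$.

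Even after that repair, a second gap remains: the conclusion required by $U_1(\mathbf{b},r)$ is closeness within the \emph{given} Morse gauge $m_b(q,Q)$ of Definition \ref{stronglymorse}, whereas the excursion argument yields the explicit constant $D(q,Q,\rho_b)$, and there is no a priori comparison. Your fix in step (i), ``fix $m_b$ to dominate the relevant expression,'' changes the gauge and hence changes the set $U_1(\mathbf{b},r)$ itself; enlarging the gauge only enlarges $U_1$-neighbourhoods, so containment in the enlarged neighbourhood does not give containment in the original one without the (nontrivial, uncited) fact that the neighbourhood filter at $\mathbf{b}$ is independent of the choice of gauge. The paper sidesteps both issues with a much shorter argument: it sets $K=\sup_{m_b(q,Q)\leq r/2}\kappa(\rho_b,q,Q)$, finite because $\max(q,Q)\leq m_b(q,Q)\leq r/2$ bounds the admissible $(q,Q)$ (this uniformity point you did identify correctly), observes that the constant $K$ is a sublinear function, and invokes the strongly Morse property of $b$ (Definition \ref{stronglymorse}) with $\kappa'\equiv K$ to produce $R$ directly, with the original gauge and with no use of Theorem \ref{quasigeoimg}, which the paper reserves for the reverse containment in Proposition \ref{U1inU}.
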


\begin{proof}
Let $b$ be the unique geodesic in the class of $\mathbf{b}$. Let $K =\sup_{m_b(q,Q)\leq r/2} \kappa(\rho_b,q,Q)$, where $\rho_b$ is the sublinear function corresponding to the geodesic $b$ as in definition \ref{contracting}. $K$ is well defined since $\kappa(\rho_b,q,Q)$ is bounded when $\max\{q,Q\} \leq m_b(q,Q)\leq r/2$. In particular $K$ is sublinear with respect to $r$.

Since $b$ is 1-Morse, there exists $R$ such that for any $(q, Q)$-quasi-geodesic ray $\alpha$ with $m_b(q, Q) \leq r/2$, if
$d_X(\alpha|_R,b) \leq K$ then $\alpha|_r \subset \mathcal{N}(b,m_b(q,Q))$. So $U (\mathbf{b},R) \subset U_1(\mathbf{b},r)$. 
\end{proof}

For the other direction, we will make use of theorem \ref{quasigeoimg}.
	
\begin{propositionSp}\label{U1inU}
	 Given $\mathbf{b} \in \bdry X ,r>0$, there exists $R>0$ such that $U_1 (\mathbf{b},R) \subset U(\mathbf{b},r)$. 
\end{propositionSp}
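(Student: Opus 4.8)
The plan is to prove the contrapositive: starting from a point $\mathbf{a} \notin U(\mathbf{b}, r)$, produce $R$ (depending only on $\mathbf{b}$ and $r$) so that $\mathbf{a} \notin U_1(\mathbf{b}, R)$. Fix the geodesic $b \in \mathbf{b}$ and its sublinear contracting function $\rho_b$. If $\mathbf{a} \notin U(\mathbf{b}, r)$, then there is some $(q,Q)$ and a continuous $(q,Q)$-quasi-geodesic ray $\alpha \in \mathbf{a}$ with $d\bigl(\alpha,\, b \cap \mathcal{N}_r^c\mathfrak{o}\bigr) > \kappa(\rho_b, q, Q)$. The goal is to find a radius $R$ and a Morse-type failure witnessing $\alpha|_R \not\subset N_1(b, m_b(q,Q))$ for the appropriate $(q,Q)$ with $m_b(q,Q) \le R/2$; this would place $\mathbf{a}$ outside $U_1(\mathbf{b}, R)$.

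The key geometric input is Theorem~\ref{quasigeoimg} (Quasi-geodesic Image Theorem): since $b$ is $\rho_b$-contracting, any subsegment of $\alpha$ that stays at distance at least $\kappa(\rho_b, q, Q)$ from $b$ projects to a set on $b$ whose diameter is controlled by the endpoints' distances to $b$. First I would argue that the condition $d(\alpha, b \cap \mathcal{N}_r^c\mathfrak{o}) > \kappa(\rho_b,q,Q)$ forces the far-out part of $\alpha$ (points at large $\|\cdot\|$) to stay uniformly far from the far-out part of $b$. Then, because $\alpha$ and $b$ both escape to infinity and their closest-point projections to $b$ are controlled by Theorem~\ref{quasigeoimg}, I would derive that $\alpha$ cannot sublinearly fellow-travel $b$ unless it gets close to $b$ somewhere — but the $\mathcal{N}_r^c$ restriction blocks that beyond radius $r$, which is a bounded region. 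Quantitatively, I would choose $R$ large enough (in terms of $r$, $\rho_b$, and the Morse gauge $m_b$) that $\alpha|_R$ extends well past radius $r$ and therefore contains a point at distance $> m_b(q,Q)$ from $b$, contradicting $\alpha|_R \subset N_1(b, m_b(q,Q))$; here one uses that $\kappa(\rho_b,q,Q) \ge 3q, 3Q^2$ and the defining inequality $r - L^2\rho(r) - A \ge \tfrac13 r \ge L^2\rho(r)$ to absorb the quasi-geodesic constants.

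The main obstacle is the uniformity of $R$ over all $(q,Q)$: the failure witness $\alpha$ may be a $(q,Q)$-quasi-geodesic with large constants, and $\kappa(\rho_b, q, Q)$ grows with $q, Q$, so naively the "far from $b$" region pushes out to infinity as $q,Q \to \infty$. The resolution is that $U_1(\mathbf{b}, R)$ only constrains quasi-geodesics with $m_b(q,Q) \le R/2$, so we only need to handle $(q,Q)$ in that bounded range; since $m_b$ is proper and $m_b(q,Q) \ge \max(q,Q)$, for each fixed $R$ only finitely many "sizes" of quasi-geodesic constants are relevant, and $\sup_{m_b(q,Q) \le R/2} \kappa(\rho_b, q, Q)$ is finite and sublinear in $R$ — exactly the quantity $K$ from the proof of Proposition~\ref{prop:UinU1}. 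So I would first bound the relevant $\kappa(\rho_b,q,Q)$ by this sublinear $K = K(R)$, then pick $R$ so that $R/2$ dominates the resulting neighbourhood constants and $R$ itself exceeds the threshold at which the contracting/Morse estimates kick in. The bookkeeping tying together $r$, $R$, $K(R)$, and the constants appearing in Theorem~\ref{quasigeoimg} is the delicate part; everything else is a direct application of the Quasi-geodesic Image Theorem and the definitions.
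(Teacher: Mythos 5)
Your geometric core --- running Theorem \ref{quasigeoimg} on the stretch of $\alpha$ between radius $r$ and radius $R$, bounding the projected diameter above sublinearly and below by roughly $R-r$, and choosing $R$ large to force a contradiction with $\alpha|_R \subset \mathcal{N}_1(b, m_b(q,Q))$ --- is exactly the paper's main case, just phrased contrapositively. The genuine gap is in how you dispose of witnesses with large quasi-geodesic constants. Membership in $U(\mathbf{b},r)$ must be verified for \emph{every} continuous $(q,Q)$-quasi-geodesic representative of $\mathbf{a}$, with no restriction on $(q,Q)$, whereas the hypothesis $\mathbf{a}\in U_1(\mathbf{b},R)$ only constrains representatives with $m_b(q,Q)\le R/2$. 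In your contrapositive the failure witness $\alpha$ arrives with whatever constants it has; if $m_b(q,Q)>R/2$, then showing $\alpha|_R\not\subset \mathcal{N}_1(b,m_b(q,Q))$ proves nothing, because the defining implication of $U_1(\mathbf{b},R)$ is vacuous for such $\alpha$. So your proposed resolution --- ``we only need to handle $(q,Q)$ with $m_b(q,Q)\le R/2$,'' with the supremum $K$ borrowed from Proposition \ref{prop:UinU1} --- gets the quantifiers backwards and simply discards the problematic case rather than handling it. Moreover, sublinearity of $K$ in $R$ is beside the point in this direction: $R$ is the parameter you are choosing, and it must depend only on $r$ and $\mathbf{b}$.

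The missing idea, which is how the paper closes this case, is the elementary estimate $d(\alpha, b\cap\mathcal{N}_r^c\mathfrak{o}) \le d(\alpha_r,\mathfrak{o})+d(\mathfrak{o},b_r) = 2r$, valid for every quasi-geodesic ray based at $\mathfrak{o}$. Since $\kappa(\rho_b,q,Q)\ge\max\{3q,3Q^2\}$, as soon as $q$ or $Q$ exceeds an explicit function of $r$ one has $\kappa(\rho_b,q,Q)\ge 2r\ge d(\alpha,b\cap\mathcal{N}_r^c\mathfrak{o})$, so the $U(\mathbf{b},r)$-condition holds automatically and such an $\alpha$ can never witness failure. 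Hence every potential witness has constants, and therefore Morse gauge $m_b(q,Q)$, bounded by a quantity depending only on $r$ and $b$; the paper then chooses $R$ larger than four times the square of that bound (so the threshold $\sqrt{R}/2$ dominates every relevant gauge), together with $R-4\sqrt{R}>r$ and $\rho_b(\sqrt{R}/2)<\sqrt{R}/2$, after which the Image-Theorem contradiction goes through as you outline. With that observation inserted, your sketch becomes the paper's proof run in the contrapositive; without it, the argument does not cover classes whose would-be witnesses have large constants.
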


\begin{proof}
Choose $R$ sufficiently large so that
\begin{align*}
	R>\max (r,1), \qquad 
	R - 4\sqrt{R} >r, \qquad 
	\rho_b(\sqrt{R}/2) < \sqrt{R}/2, \\
	\intertext{and } R > 4 \max_{q' \leq \sqrt{2r/3}, ~ Q' \leq 2r/3}m_b(q',Q')^2.
\end{align*}
The third inequality holds for sufficiently large $R$ because $\rho_b$ is sublinear. 

Let $b \in \mathbf{b}$ be the unique geodesic in the class. For any $\mathbf{a} \in U_1(\mathbf{b},R)$, let $\alpha \in \mathbf{a}$ be a continuous $(q,Q)$-quasi-geodesic. Recall that we can assume $\max(q,Q)<m_b(q,Q)$.

First consider the case where $m_b(q,Q) \leq \sqrt{R}/2$. In particular, 
\begin{equation*}
	m_b(q,Q) \leq \sqrt{R}/2 < R/2.
\end{equation*}
So by definition, $\alpha|_R \subset \mathcal{N}(b,m_b(q,Q))$. For contradiction, suppose that for all $r \leq t \leq R$,
\begin{equation*}
	d_X(\alpha(t),b) > \kappa(\rho_b,q,Q).
\end{equation*}
By theorem \ref{quasigeoimg},
\begin{align*}
	\diam \pi(\alpha_r)\union \pi(\alpha_R)
	&\leq \frac{q^2 +1}{q^2}(Q + d(\alpha_R,Z))+ \frac{q^2-1}{q^2}d(\alpha_r,Z) + 2 \rho_b(d(\alpha_r,Z))\\
	&\leq \frac{q^2 +1}{q^2}(Q + m_b(q,Q))+ \frac{q^2-1}{q^2}m_b(q,Q) + 2 \rho_b(m_b(q,Q))\\
	&= \frac{q^2+1}{q^2}Q + 2 m_b(q,Q) + 2 \rho_b(m_b(q,Q))\\
	&\leq  2 m_b(q,Q) + 2 m_b(q,Q) + 2 \rho_b(\sqrt{R}/2)\\
	& \leq 3 \sqrt{R}.
\end{align*}

	\begin{figure}
	\includegraphics[width=\textwidth]{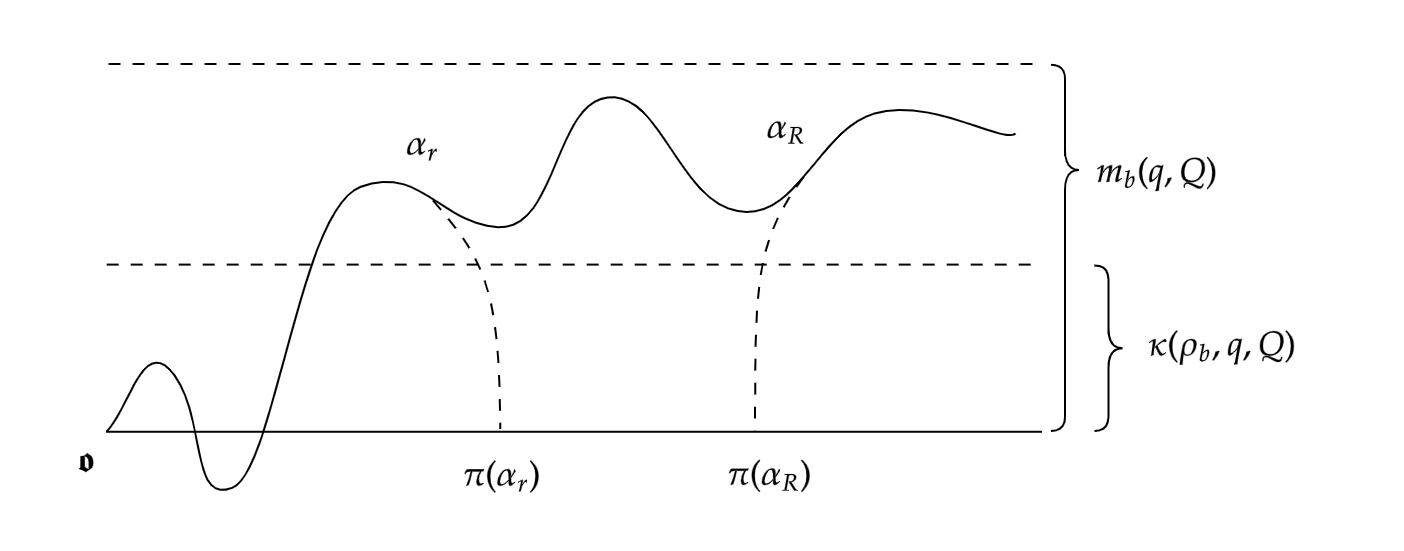}	
	\caption{Proof of proposition \ref{U1inU} in the case where $m_b(q,Q) \leq \sqrt{R}/2$.}\label{U1inUimg}
	\end{figure}

On the other hand, the projection can be bounded below by 
\begin{equation*}
	\diam \pi(\alpha_r)\union \pi(\alpha_R)
	 \geq (R-r)-d_X(\alpha_r, \pi(\alpha_r)) -d_X(\alpha_R, \pi(\alpha_R)) 
	 \geq R-r-\sqrt{R}.
\end{equation*}
Combining the two inequalities, we have that
\begin{equation*}
	3\sqrt{R} \geq R-r-\sqrt{R}.
\end{equation*}
But this contradicts the assumption that $R-4\sqrt{R}>r$. We conclude that $d_X(\alpha_t,b) \leq \kappa(\rho_b,q,Q)$ for some $r \leq t \leq R$.

We are left with the case where $m_b(q,Q) > \sqrt{R}/2$. In this case, 
\begin{equation*}
	m_b(q,Q)> \sqrt{R}/2 >\max_{q' \leq \sqrt{2r/3}, Q' \leq 2r/3} m_b(q',Q'))^2.
\end{equation*}
Then either $q> \sqrt{2r/3}$ or $Q > 2r/3$, in which case
\begin{equation*}
	\kappa(\rho_b,q,Q) \geq \max\{3Q,3q^2\} > 2r \geq d(\alpha, b \intersect N_r^c o ).
\end{equation*}
The last inequality holds because 
\begin{equation*}
	d(\alpha, b \intersect N_r^c o) \leq d(\alpha_r,o) + d(b_r, o) = 2r.
\end{equation*}

We conclude that for sufficiently large $R$, $d(\alpha, b \intersect N_r^c o) \leq \kappa(\rho_b,q,Q)$. Hence $U_1(\mathbf{b},R) \subset U(\mathbf{b},r)$.
\end{proof}

We have proven theorem \ref{main} by combining lemma \ref{lem: equal sets}, proposition \ref{prop:UinU1}, and proposition \ref{U1inU}.

\newpage
\bibliography{mybib}{}
\bibliographystyle{plain}

\end{document}